\newtheorem{thm}{Theorem}[section]
\theoremstyle{definition}
\newtheorem{defn}[thm]{Definition}
\theoremstyle{remark}
\newtheorem{rem}[thm]{Remark}
\begin{document}

\title[Convergence of positive series]{Necessary and sufficient conditions for the convergence of positive series}%
\author{Vyacheslav M. Abramov}%
\address{24 Sagan Drive, Cranbourne North, Victoria, 3977, Australia}%

\email{vabramov126@gmail.com}%

\subjclass{40A05; 41A58; 28A99; 60J80}%
\keywords{positive series; convergence or divergence of series; asymptotic expansions; measure theory; birth-and-death process}
\begin{abstract}
We provide new necessary and sufficient conditions for the convergence of positive series developing Bertran--De Morgan and Cauchy type tests given in [M. Martin, \emph{Bull. Amer. Math. Soc.} 47(1941), 452–-457] and [L. Bourchtein et al, \emph{Int. J. Math. Anal.} 6(2012), 1847--1869]. The obtained result enables us to extend the known conditions for recurrence and transience of birth-and-death processes given in [V. M. Abramov, \emph{Amer. Math. Monthly} 127(2020) 444--448].
\end{abstract}
\maketitle

\section{Introduction}
Let
\begin{equation}\label{4}
\sum_{n=1}^{\infty}a_n
\end{equation}
be a series with positive terms, for which $a_{n+1}\leq a_n$, $n\geq1$.

The ratio tests of convergence or divergence of \eqref{4} are widely known and go back to the works of d'Alembert and Cauchy as well as many other researchers in the eighteenth and nineteenth centuries such as Raabe, Gauss, Bertrand, De Morgan and Kummer. They are classified into the De Morgan hierarchy \cite{B, BM}. The extended Bertrand--De Morgan test is the last test in this hierarchy. It was originally established in \cite{M}. An elementary proof of this test, its connection with Kummer's test, as well as its application to birth-and-death processes is given in \cite{A}. Further generalization of the extended Bertrand--De Morgan test based on the connection with the class of regularly varying functions is given in \cite{ACO}. Similarly defined hierarchy of Cauchy's tests is obtained in \cite{BBNV}.

In the present note, we establish necessary and sufficient conditions for convergence of positive series that generalize the original version of the extended Bertrand--De Morgan test \cite{A,M} and Bertrand--De Morgan--Cauchy test \cite{BBNV}. The first theorem on a necessary and sufficient condition for convergence of series was obtained by Cauchy \cite{C}, widely known as Cauchy's convergence test.
Later, at the beginning of the twentieth century, necessary and sufficient conditions for convergence of positive series were obtained by Brink \cite{Br1, Br2}. The statements of the aforementioned theorems \cite{Br1, Br2} involve the convergence of double or triple improper integrals having the complex expressions. Furthermore, the test in \cite[page 47]{Br2} is based on the double ratios $r_n=a_{n+1}/a_n$ and $R_n=r_{n+1}/r_n$. This made the areas of their applications very limited by problems having technical nature. The basic theorem of Brink \cite{Br1} was developed in \cite{R}. Another theorem of Brink \cite{Br2} is mentioned in \cite{M} as starting point for the derivation of the main result. A theorem of Tong \cite{T} that develops Kummer's test also provides necessary and sufficient conditions for convergence or divergence of positive series. However, its practical applications to real world problems is hard, since it requires a special construction that involves an auxiliary sequence.

The idea of our approach is to use the extended Bertrand--De Morgan--Cauchy test in the form of an explicit inequality with the following derivation of the expression for $a_n$ for large $n$. Then the test can be naturally adapted to the problems from applied areas.
Specifically, the result obtained in this note improves the conditions of recurrence or transience for birth-and-death processes given in \cite[Theorem 3]{A}, thus extending the class of birth-and-death processes for which that condition can be established.

Below we recall the extended Bertrand--De Morgan test in the formulation given in \cite{A}.
Let $\ln_{(k)}z$ denote the $k$th iterate of natural logarithm, i.e. $\ln_{(1)}z$ $=\ln z$, and $\ln_{(k)}z=\ln(\ln_{(k-1)}z)$, $k\geq2$.

\begin{thm}\label{t0}
Suppose that for all large $n$ and some $K\geq1$
\begin{equation}\label{0}
\frac{a_n}{a_{n+1}}=1+\frac{1}{n}+\frac{1}{n}\sum_{i=1}^{K-1}\frac{1}{\prod_{k=1}^{i}\ln_{(k)}n}+\frac{s_n}{n\prod_{k=1}^{K}\ln_{(k)}n}.
\end{equation}
(The empty sum is set to $0$.) Then \eqref{4} converges if $\liminf_{n\to\infty} s_n > 1$, and it diverges if $\limsup_{n\to\infty} s_n < 1$.
\end{thm}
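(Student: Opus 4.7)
My plan is to apply Kummer's test with the auxiliary sequence $B_n=n\prod_{k=1}^{K}\ln_{(k)}n$, which is the natural choice given the scale of the last term in \eqref{0}, and whose reciprocals $\sum_n 1/B_n$ form the classical divergent iterated-logarithm series (by the integral test). Substituting \eqref{0} into $B_n\,a_n/a_{n+1}$ and using the identity $\prod_{k=1}^{K}\ln_{(k)}n\big/\prod_{k=1}^{i}\ln_{(k)}n=\prod_{k=i+1}^{K}\ln_{(k)}n$, every factor telescopes cleanly and I get
\[
B_n\frac{a_n}{a_{n+1}} \;=\; B_n \;+\; \sum_{i=0}^{K-1}\prod_{k=i+1}^{K}\ln_{(k)}n \;+\; s_n.
\]

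Next, I would expand $B_{n+1}-B_n$ asymptotically. Writing $B(x)=x\prod_{k=1}^{K}\ln_{(k)}x$ and using $\frac{d}{dx}\ln_{(k)}x=\bigl(x\prod_{j=1}^{k-1}\ln_{(j)}x\bigr)^{-1}$, a direct computation yields
\[
B'(x)\;=\;\prod_{k=1}^K\ln_{(k)}x\;+\;\sum_{i=1}^{K-1}\prod_{j=i+1}^K\ln_{(j)}x\;+\;1\;=\;\sum_{i=0}^{K-1}\prod_{j=i+1}^K\ln_{(j)}x\;+\;1,
\]
and a further differentiation shows $B''(x)=o(1)$ (the largest surviving term is of order $\ln_{(2)}x\cdots\ln_{(K)}x/x$). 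Taylor's theorem then gives $B_{n+1}-B_n=B'(n)+O\!\bigl(\sup_{[n,n+1]}|B''|\bigr)$, hence
\[
B_{n+1} \;=\; B_n \;+\; \sum_{i=0}^{K-1}\prod_{k=i+1}^{K}\ln_{(k)}n \;+\; 1 \;+\; o(1).
\]
Subtracting the two displays, the iterated-log products cancel exactly and I obtain $B_n\,a_n/a_{n+1}-B_{n+1}=s_n-1+o(1)$.

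Kummer's test now closes the argument: if $\liminf_{n\to\infty}s_n>1$ the liminf of the left side is strictly positive and $\sum a_n$ converges, while if $\limsup_{n\to\infty}s_n<1$ its limsup is strictly negative and divergence of $\sum 1/B_n$ forces divergence of $\sum a_n$. The step requiring the most care is the asymptotic expansion of $B_{n+1}-B_n$: the cancellation producing $s_n-1$ is \emph{exact}, so any mistracked term would leave an $O(1)$ residue and destroy the dichotomy. One must therefore verify that the finite differences of the nested logarithms are matched by the corresponding derivatives up to a genuine $o(1)$ error, uniformly across the $K$ levels of the expansion. Once that bookkeeping is done, the substitution in the first step and the final appeal to Kummer are routine.
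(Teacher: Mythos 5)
Your argument is correct: with $B_n=n\prod_{k=1}^{K}\ln_{(k)}n$ the substitution of \eqref{0} does give $B_n a_n/a_{n+1}=B_n+\sum_{i=0}^{K-1}\prod_{k=i+1}^{K}\ln_{(k)}n+s_n$, your formula for $B'(x)$ is right, $B''(x)=O\bigl(x^{-1}\prod_{k=2}^{K}\ln_{(k)}x\bigr)=o(1)$, so the Kummer quantity is exactly $s_n-1+o(1)$; together with the divergence of $\sum_n 1/B_n$ (integral test, antiderivative $\ln_{(K+1)}x$) both halves of Kummer's test apply and the dichotomy follows. Note, however, that the paper does not prove Theorem \ref{t0} internally at all: it cites \cite{M} for the original result and \cite{A} for an elementary proof and its connection with Kummer's test, so your route is essentially a reconstruction of the argument of the cited reference \cite{A} rather than of anything in this note. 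The machinery this paper actually develops is different: for the root-test analogue (Theorem \ref{t01}) it takes logarithms, Taylor-expands, and solves for $a_n$ explicitly, arriving at \eqref{15}, $a_n\sim \mathrm{e}^{-1}\bigl(n\prod_{k=1}^{K-2}\ln_{(k)}n\bigr)^{-1}\ln_{(K-1)}^{-s_n}n$, and then applies the integral test. Your Kummer approach is shorter and self-contained for the ratio statement, but it only delivers the convergence/divergence dichotomy; the paper's asymptotic-form approach is what it needs later, since the explicit shape \eqref{15} of $a_n$ is reused in the proof of Theorem \ref{t1} (cases (i) and (ii)) and in the discussion of the boundary case $s_n\to1$, which Kummer's test cannot see. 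One could also obtain such explicit asymptotics in the ratio setting by telescoping \eqref{0}, but that is precisely the extra bookkeeping your proof avoids.
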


 The statement of Theorem \ref{t0} is a ratio test for the fraction $a_n/a_{n+1}$ when $n$ is large. For the purpose of the present paper
 an application of this theorem is insufficient (see explanation given later in Remark \ref{r1}), and
 we need a stronger version of the theorem, in which the fraction on the left-hand side of \eqref{0} is replaced with $\sqrt[-n]{a_n}$, i.e. the ratio test is replaced with a variant of Cauchy's root test. This stronger version presents Bertrand--De Morgan--Cauchy test formulated below.
\begin{thm}\label{t01}
Suppose that for all large $n$ and some $K\geq1$
\begin{equation*}\label{00}
\sqrt[-n]{a_n}=1+\frac{1}{n}+\frac{1}{n}\sum_{i=1}^{K-1}\frac{1}{\prod_{k=1}^{i}\ln_{(k)}n}+\frac{s_n}{n\prod_{k=1}^{K}\ln_{(k)}n}.
\end{equation*}
Then \eqref{4} converges if $\liminf_{n\to\infty} s_n > 1$, and it diverges if $\limsup_{n\to\infty} s_n < 1$.
\end{thm}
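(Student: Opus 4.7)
The natural strategy is to reduce Theorem~\ref{t01} to Theorem~\ref{t0} by translating the Cauchy-type root hypothesis on $\sqrt[-n]{a_n}$ into the ratio hypothesis on $a_n/a_{n+1}$ required by Theorem~\ref{t0}. Setting $u_n := \sqrt[-n]{a_n}$, so that $a_n = u_n^{-n}$, one has
\[
\ln \frac{a_n}{a_{n+1}} = (n+1)\ln u_{n+1} - n\ln u_n = n(\ln u_{n+1}-\ln u_n) + \ln u_{n+1}.
\]
The hypothesis writes $u_n = 1+\eta_n$, with $\eta_n$ an explicit combination of $1/(n\prod_{k=1}^{i}\ln_{(k)}n)$-type terms.

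First I would expand $\ln u_n = \eta_n - \eta_n^2/2 + \cdots$ and retain every term down to the precision $1/(n\prod_{k=1}^{K}\ln_{(k)}n)$. The cross-term $\eta_n^2$ is of order $1/n^2$, so after multiplication by $n$ it contributes at most at order $1/n$, well below the threshold of the iterated-logarithm expansion, and can be absorbed into the error.

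Next I would estimate the discrete difference $\ln u_{n+1}-\ln u_n$. Each summand of $\eta_n$ has the form $g(n)=1/(n\prod_{k=1}^{i}\ln_{(k)}n)$, and a mean-value calculation yields $g(n+1)-g(n) = g'(n)(1+o(1))$, of order $g(n)/n$; multiplying by $n$ thus returns a contribution of the same size as $g(n)$. Careful accounting shows that these contributions, combined with $\ln u_{n+1}$, recover precisely the iterated-logarithm structure of Theorem~\ref{t0}.

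Putting the pieces together, I would obtain
\[
\frac{a_n}{a_{n+1}} = 1 + \frac{1}{n} + \frac{1}{n}\sum_{i=1}^{K-1}\frac{1}{\prod_{k=1}^{i}\ln_{(k)}n} + \frac{\tilde s_n}{n\prod_{k=1}^{K}\ln_{(k)}n},
\]
with $\tilde s_n = s_n + o(1)$, so that $\liminf \tilde s_n = \liminf s_n$ and $\limsup \tilde s_n = \limsup s_n$; Theorem~\ref{t0} then yields the desired convergence/divergence dichotomy. The principal obstacle will be the bookkeeping in this final step: one must verify that the Taylor corrections and the contributions of $n(\ln u_{n+1}-\ln u_n)$ recombine exactly into the iterated-logarithm terms of Theorem~\ref{t0}, with a residual shift $\tilde s_n - s_n$ that is genuinely $o(1)$ and cannot flip the strict inequalities $\liminf s_n > 1$ or $\limsup s_n < 1$ across the threshold.
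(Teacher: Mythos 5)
Your reduction to Theorem \ref{t0} breaks down precisely at the step you defer to ``bookkeeping'', and it cannot be repaired. Write $u_n=1+\eta_n$ and $n\eta_n=1+\sigma_n$, where $\sigma_n=\sum_{i=1}^{K-1}\prod_{k=1}^{i}(\ln_{(k)}n)^{-1}+s_n\prod_{k=1}^{K}(\ln_{(k)}n)^{-1}\to0$. Since $\eta_n=O(1/n)$, your identity gives
\[
\ln\frac{a_n}{a_{n+1}}=(n+1)\ln u_{n+1}-n\ln u_n=(1+\sigma_{n+1})-(1+\sigma_n)+O\!\left(\frac{1}{n}\right)=\sigma_{n+1}-\sigma_n+O\!\left(\frac{1}{n}\right).
\]
In other words, the contribution $n\bigl(g(n+1)-g(n)\bigr)\approx-g(n)$ from each smooth summand cancels against the corresponding $+g(n+1)$ hidden in $\ln u_{n+1}$; nothing ``recombines'' into the hierarchy of Theorem \ref{t0}, and in particular no term $\tfrac1n$ with coefficient $1$ survives. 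This is a scale mismatch, not an accounting issue: the ratio expansion $1+\tfrac1n+\cdots$ of Theorem \ref{t0} corresponds to root behaviour $1+\tfrac{\ln n}{n}+\cdots$, so a term-by-term transfer with the same coefficients is impossible. Worse, the summand carrying $s_n$ is not smooth, so your mean-value estimate does not apply to it; its first difference contributes $(s_{n+1}-s_n)\prod_{k=1}^{K}(\ln_{(k)}n)^{-1}$, which is of larger order than $1/n$ and is not constrained by the hypotheses (only $\liminf s_n$ and $\limsup s_n$ are restricted, not the increments $s_{n+1}-s_n$). Hence the key claim $\tilde s_n=s_n+o(1)$ is unjustified, and if you force the $a_n/a_{n+1}$ expansion into the form of Theorem \ref{t0} the resulting coefficient is in general unbounded rather than a small perturbation of $s_n$.

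The paper does not pass through Theorem \ref{t0} at all, and this is the point of calling Theorem \ref{t01} a stronger, root-type statement. Its proof treats the hypothesis as an equation for $a_n$ itself: writing $\sqrt[-n]{a_n}=\mathrm{e}^{-\frac1n\ln a_n}$, taking logarithms and Taylor-expanding (Appendix A) yields the explicit closed form \eqref{15} for $a_n$ for large $n$, and convergence or divergence is then read off from the integral test. If you want to salvage your argument, you should abandon the passage to $a_n/a_{n+1}$ — which only sees discrete differences of the root data and thereby loses exactly the information the hypothesis provides — and instead extract $\ln a_n$ (hence $a_n$) directly from the stated equation, as the paper does.
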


A lengthy proof of this theorem in another formulation can be found in \cite{BBNV}. For the purpose of the present paper we provide a simple alternative proof given below.
\smallskip

 Theorem \ref{t01} follows, since $\sqrt[-n]{a_n}=\mathrm{e}^{-\frac{1}{n}\ln a_n}$ leads to the equation
\begin{equation}\label{16}
\mathrm{e}^{-\frac{1}{n}\ln a_n}=1+\frac{1}{n}+\frac{1}{n}\sum_{i=1}^{K-1}\frac{1}{\prod_{k=1}^{i}\ln_{(k)}n}+\frac{s_n}{n\prod_{k=1}^{K}\ln_{(k)}n}, \quad K\geq1.
\end{equation}
With some algebra (see Appendix A) for large $n$ we arrive at
\begin{equation}\label{15}
a_n=\begin{cases}\frac{C_n}{\left(n\prod_{k=1}^{K-2}\ln_{(k)}n\right)\ln_{(K-1)}^{s_n}n}, &K\geq2,\\
\frac{C_n}{n^{s_n}}, &K=1,
\end{cases}
\end{equation}
where the empty product is set to 1, and $C_n$ are the constants satisfying the property $\lim_{n\to\infty}C_n=\mathrm{e}^{-1}$. Then, convergence or divergence of \eqref{4} follows from the integral test for convergence or divergence of series applied to \eqref{15}.
\smallskip

Theorem \ref{t01} is then used to prove the main result of this paper formulated in the next section.

The cases $\liminf_{n\to\infty} s_n=1$ or $\limsup_{n\to\infty} s_n=1$ remain undefined for both Theorems \ref{t0} and \ref{t01}. The main result of this paper covers all possible limit cases including these undefined ones.

\smallskip
The rest of the note is structured into two sections. In Section \ref{S2}, the main result of this note is proved. In Section \ref{S3}, an application of the main result to birth-and-death processes is discussed.

\section{Necessary and sufficient conditions for convergence of \eqref{4}}\label{S2}
The theorem given below provides necessary and sufficient conditions for the convergence of positive series.

\smallskip
Let $\mathscr{N}\subset\mathbb{N}$, and let $N(n)$ denote the number of integers in $\mathscr{N}$ not greater than $n$.

\begin{defn}\label{defn1}
We say that the set $\mathscr{N}$ contains \textit{almost all} elements of $\mathbb{N}$, if $\lim_{n\to\infty}N(n)/n=1$.
\end{defn}

\begin{defn}
We say that the set $\mathscr{N}$ contains \textit{strongly almost all} elements of $\mathbb{N}$, if $N(n)=n+O(1)$ as $n\to\infty$.
\end{defn}

Elementary examples of these definitions are $N(n)=n-\lfloor \ln n\rfloor$, where $\lfloor a\rfloor$ denotes the integer part of $a$, and $N(n)=n-c_n$, where $c_n<n$ is a bounded sequence of integers. In the first case, $\mathscr{N}$ contains almost all elements. In the second one, $\mathscr{N}$ contains strongly almost all elements.

\begin{thm}\label{t1}
Suppose that  there exist constants $r$ and $\alpha>0$ such that for all values $n$ we have $a_{n}<rn^{-\alpha}$.
Then \eqref{4} converges if there exist integer $K\geq1$ and real $c>1$ such that for strongly almost all $n$
\begin{equation}\label{13}
\sqrt[-n]{a_n}\geq1+\frac{1}{n}+\frac{1}{n}\sum_{i=1}^{K-1}\frac{1}{\prod_{k=1}^{i}\ln_{(k)}n}+\frac{c}{n\prod_{k=1}^{K}\ln_{(k)}n},
\end{equation}
and only if \eqref{13}
is satisfied for almost all $n$.
\end{thm}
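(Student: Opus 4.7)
The plan is to split the proof into the ``if'' (sufficiency) and ``only if'' (necessity) directions. Sufficiency is a short application of Theorem \ref{t01}; necessity is the heart of the argument and is where the hypothesis $a_n<rn^{-\alpha}$ does real work.

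For sufficiency, I would first unpack ``strongly almost all'': because $N(n)\le n$ automatically, the relation $N(n)=n+O(1)$ forces the non-negative integer sequence $n-N(n)$ to be bounded, so $\mathbb{N}\setminus\mathscr{N}$ is in fact finite. Hence \eqref{13} holds for all $n\ge N_0$ for some $N_0$. Given $(a_n)$, I define
\[
s_n:=\left(\sqrt[-n]{a_n}-1-\frac{1}{n}-\frac{1}{n}\sum_{i=1}^{K-1}\frac{1}{\prod_{k=1}^{i}\ln_{(k)}n}\right)n\prod_{k=1}^{K}\ln_{(k)}n,
\]
so that the defining equation of Theorem \ref{t01} holds tautologically. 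Inequality \eqref{13} now reads exactly $s_n\ge c$ for $n\ge N_0$, giving $\liminf_{n\to\infty} s_n\ge c>1$, and Theorem \ref{t01} delivers convergence of \eqref{4}.

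For the necessary direction, assume $\sum a_n<\infty$. The task is to produce $K\ge 1$ and $c>1$ such that, with $s_n$ defined as above, the set $\{n:s_n\ge c\}$ has natural density $1$. The plan is to run the algebra of Appendix A (equations \eqref{16}--\eqref{15}) in reverse: summability of $\sum a_n$ combined with the monotonicity $a_{n+1}\le a_n$ forces, for some $K$, the Bertrand--De Morgan envelope $a_n\le C/(n\prod_{k=1}^{K-2}\ln_{(k)}n\cdot\ln_{(K-1)}^{c}n)$ to hold on a density-one set of indices, for some $c>1$. Reversing the algebra converts this envelope into the pointwise lower bound $s_n\ge c$ on the same set. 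The a priori hypothesis $a_n<rn^{-\alpha}$ enters as a uniform cushion $\sqrt[-n]{a_n}\ge r^{-1/n}n^{\alpha/n}$ which prevents the complementary ``bad'' indices, where the envelope is violated, from having positive upper density.

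The main obstacle is the necessity step. A convergent positive series need not sit on any fixed Bertrand--De Morgan scale (a reflection of du Bois-Reymond's theorem on convergent series), so one cannot preselect $K$; it must be chosen on the basis of $(a_n)$ itself. The weakening of ``for all large $n$'' (as in Theorem \ref{t01}) to ``for almost all $n$'', paired with the polynomial bound $a_n<rn^{-\alpha}$, is precisely the extra flexibility that allows such a $K$ to be extracted: the technical core is a density argument showing that if \eqref{13} failed on a set of positive upper density for every choice of $(K,c)$, then the mass of $a_n$ on those exceptional indices, estimated from below using $a_n<rn^{-\alpha}$ and monotonicity, would already contradict summability of $\sum a_n$.
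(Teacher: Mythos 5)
Your sufficiency argument is fine and is actually shorter than the paper's: since $n-N(n)$ is a non-decreasing integer sequence, $N(n)=n+O(1)$ does force the exceptional set to be finite, so \eqref{13} holds for all large $n$, the tautological $s_n$ satisfies $\liminf_n s_n\ge c>1$, and Theorem \ref{t01} applies directly. (The paper instead splits the series into $I_1+I_2$, uses $a_n<rn^{-\alpha}$ to bound $I_2$, and renumbers the terms of $I_1$; your route bypasses that renumbering.)

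The necessity half, however, contains a genuine gap: its central claim --- that convergence plus monotonicity forces, for \emph{some} $K$ and some $c>1$, the Bertrand--De Morgan envelope (equivalently \eqref{13}) on a density-one set --- is exactly the statement to be proved, and you only announce a plan for it. Moreover, the mechanism you sketch goes the wrong way. If \eqref{13} fails at an index $n$ for a given pair $(K,c)$ with $c>1$, then reversing the algebra of Appendix A yields only a lower bound of the form $a_n\ge C\big(n\prod_{k=1}^{K-2}\ln_{(k)}n\,\ln_{(K-1)}^{c}n\big)^{-1}$, i.e.\ a bound by the general term of a \emph{convergent} Bertrand series; even if such a bound held at every index, the resulting ``mass on the exceptional set'' is finite and produces no contradiction with $\sum_n a_n<\infty$. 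A divergent lower bound requires the reverse inequality with a constant $c^*\le 1$, or the boundary identity with $K$ allowed to grow --- precisely the dichotomy (cases (i) and (ii), handled through representation \eqref{15}) on which the paper's necessity argument rests and which is absent from your plan. Your appeal to du Bois--Reymond highlights the difficulty rather than resolving it: for a convergent decreasing series built (by du Bois-Reymond/Abel--Dini type constructions) to dominate every fixed Bertrand scale, the set where \eqref{13} holds for a fixed $(K,c)$ is not merely of deficient density but can be finite, while the cushion $a_n<rn^{-\alpha}$ holds automatically (monotonicity and convergence already give $a_n\le C/n$), so the polynomial bound supplies no extra leverage for extracting $K$. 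As written, the necessity direction is a restatement of the goal together with a heuristic whose key estimate is not strong enough to force divergence.
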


\begin{proof}
Assume that $N(n)/n=1+O(1/n)$,  and $\mathbb{N}\setminus\mathscr{N}$ is the subset of indices for which \eqref{13} is not satisfied.
Write
\begin{equation}\label{8}
\sum_{n=1}^{\infty}a_n=\underbrace{\sum_{n\in\mathscr{N}}a_n}_{=I_1}+\underbrace{\sum_{n\in\mathbb{N}\setminus\mathscr{N}}a_n}_{=I_2}.
\end{equation}
Since $N(n)/n=1+O(1/n)$, then the fraction of the terms satisfying the inequality $a_{n}<rn^{-\alpha}$ and not satisfying \eqref{13} is $O(1/n)$ as $n\to\infty$, and hence
$
I_2<R\sum_{n\in\mathbb{N}}n^{-1-\alpha}<\infty
$
for some constant $R$.
Then $I_1$ contains the only terms, for which \eqref{13} is satisfied. Combining these terms, we have the presentation
\begin{equation}\label{2}
I_1=\sum_{i_1=j_1}^{n_1}a_{i_1}+\sum_{i_2=n_1+j_2}^{n_2}a_{i_2}+\ldots,
\end{equation}
where the series of sums is given over the indices belonging to $\mathscr{N}$. Since $N(n)=n+O(1)$, then, as $m\to\infty$, the difference between the indices $n_{m}+j_{m+1}$ and $n_m$, that are the lower index of the $m+1$st sum and the upper index of the $m$th sum, respectively, must be bounded. That is, $j_m=O(1)$. Hence taking into account the expression on the right-hand side of \eqref{13} we obtain the estimate
\begin{equation}\label{3}
\sqrt[-n_m]a_{n_m}=\sqrt[-n_{m}-j_{m+1}]a_{n_m}+O\left(\frac{1}{n_m^2}\right).
\end{equation}
Specifically, the presence of the remainder term $O(n_m^{-2})$ in \eqref{3} is explained by the fact that
\[
\left[1+\frac{1}{n_m}+o\left(\frac{1}{n_m}\right)\right]-\left[1+\frac{1}{n_m+j_{m+1}}+o\left(\frac{1}{n_m+j_{m+1}}\right)\right]=O\left(\frac{1}{n_m^2}\right).
\]

Then, renumbering the terms in $I_1$ and taking into account estimate \eqref{3}, we arrive at the new series
$
\sum_{n=1}^\infty a_n^\prime
$
that approximates $I_1$.

According to \eqref{13} there exist $c>1$ and integers $K$ and $n_0$ such that for all $n>n_0$
\[
\sqrt[-n]{a_{n}^\prime}\geq1+\frac{1}{n}+\frac{1}{n}\sum_{i=1}^{K-1}\frac{1}{\prod_{k=1}^{i}\ln_{(k)}n}+\frac{c}{n\prod_{k=1}^{K}\ln_{(k)}n},
\]
and the sufficient condition follows by application of Theorem \ref{t01}.

For the necessary condition, we are to prove that if no such $K$ that \eqref{13} is satisfied with $c>1$ for almost all $n$, then series \eqref{4} diverges. Suppose that \eqref{13} is satisfied with $c>1$ and $K\geq1$ only for some $\mathscr{N}\subset\mathbb{N}$ such that $\lim_{n\to\infty}N(n)/n=\alpha<1$. Then,
$
\sum_{n=1}^{\infty}a_n=I_1+I_2> I_2.
$
Write
\[
I_2=\sum_{i_1=j_1}^{n_1}a_{i_1}+\sum_{i_2=n_1+j_2}^{n_2}a_{i_2}+\ldots,
\]
where the series of sums is given over the indices belonging to $\mathbb{N}\setminus\mathscr{N}$. Under the assumption $\lim_{n\to\infty}N(n)/n=\alpha<1$, the fraction of the terms in the series that are not satisfied \eqref{13} is proportional to $1-\alpha>0$. Let $N_m=n_m-n_{m-1}-j_m+1$ be the number of consecutive terms in the sum $\sum_{i_{m}=n_{m-1}+j_m}^{n_m}a_{i_m}$. We have the following cases:
\begin{enumerate}
\item [(a)]  the sequences $N_m$ and $j_m$ are bounded;
\item [(b)]  $\limsup_{m\to\infty}N_m=\infty$.
\end{enumerate}

To demonstrate these cases, we provide the following example. Consider a series, the terms of which for all large $n$ satisfy the relation
\begin{equation}\label{21}
\sqrt[-n]{a_n}=1+\frac{1}{n}+\frac{1}{n}\sum_{i=1}^{K-1}\frac{1}{\prod_{k=1}^{i}\ln_{(k)}n}+\frac{1+\epsilon_n}{n\prod_{k=1}^{K}\ln_{(k)}n}, \quad K\geq1,
\end{equation}
where $\epsilon_n$ is a vanishing sequence. Apparently, \eqref{21} can be rewritten in the form
\[
\sqrt[-n]{a_n}=1+\frac{1}{n}+\frac{1}{n}\sum_{i=1}^{K}\frac{1}{\prod_{k=1}^{i}\ln_{(k)}n}+\frac{\epsilon_n\ln_{(K+1)}n}{n\prod_{k=1}^{K+1}\ln_{(k)}n}, \quad K\geq1.
\]
Hence, according to Theorem \ref{t01}, \eqref{4} converges if $\epsilon_n>C\big(\ln_{(K+1)}n\big)^{-1}$, $C>1$, and it diverges if $\epsilon_n\leq\big(\ln_{(K+1)}n\big)^{-1}$. (The last statement does not follow directly from the formulation of Theorem \ref{t01}, but easily follows from the derivations provided in Appendix A.)

If $\epsilon_n=\big(\ln_{(K+1)}n\big)^{-1}+\epsilon_n^\prime$, where $\epsilon_n^\prime$ is another vanishing sequence, then the problem reduces to that considered above involving the additional term in presentation \eqref{21}. That is,
\[
\sqrt[-n]{a_n}=1+\frac{1}{n}+\frac{1}{n}\sum_{i=1}^{K}\frac{1}{\prod_{k=1}^{i}\ln_{(k)}n}+\frac{1+\epsilon_n^\prime}{n\prod_{k=1}^{K+1}\ln_{(k)}n}.
\]
Due to the recursion, there can be any number of additional terms.
\smallskip

Now, to illustrate case (a), one can imagine that $\epsilon_n$ for $n\geq n_0$, where $n_0$ is some odd large number, satisfies the following property: $\epsilon_n>C\big(\ln_{(K+1)}n\big)^{-1}$, $C>1$, if $n$ is even, and $\epsilon_n\leq\big(\ln_{(K+1)}n\big)^{-1}$ if $n$ is odd. Then the terms of the series, for which $\epsilon_n>C\big(\ln_{(K+1)}n\big)^{-1}$, $C>1$, belong to $I_1$, while the remaining terms, for which $\epsilon_n\leq\big(\ln_{(K+1)}n\big)^{-1}$, belong to $I_2$. That is,
\[
I_1=\sum_{k=(n_0+1)/2}^{\infty}a_{2k}, \quad I_2=\sum_{k=(n_0+1)/2}^{\infty}a_{2k-1}.
\]
The terms of $I_2$ can be further renumbered as $a_{n_0}^\prime$, $a_{n_0+1}^\prime$,\ldots such that the terms of a new series $\sum_{j=n_0}^{\infty}a_j^\prime$ satisfy the inequality
\begin{equation}\label{18}
\sqrt[-j]{a_j^\prime}\leq1+\frac{1}{j}+\frac{1}{j}\sum_{i=1}^{K}\frac{1}{\prod_{k=1}^{i}\ln_{(k)}j}.
\end{equation}
Apparently, presentation \eqref{18} is true in the general situation under the conditions described in case (a).

Case (b) also can be derived on the basis of presentation \eqref{21}. For instance, one can assume that starting from $n_0$ we have the following alternate relationships given in Table \ref{tab1}.
\begin{table}
    \begin{center}
            \caption{Correspondence between the values of $n$ and inequalities for $\epsilon_n$ ($n\geq n_0$, $C>1$)} \label{tab1}
        \begin{tabular}{l|l}\hline\hline
\text{Value of} \ $n$  & \text{Corresponding inequality for} \ $\epsilon_n$\\
\hline\hline
$n_0$             & $\epsilon_n>C(\ln_{(K+1)}n)^{-1}$\\
\hline
$n_0+1$           & $\epsilon_n\leq(\ln_{(K+1)}n)^{-1}$\\
\hline
$n_0+2$           & $\epsilon_n>C(\ln_{(K+1)}n)^{-1}$\\
$n_0+3$           & \\
\hline
$n_0+4$           & $\epsilon_n\leq(\ln_{(K+1)}n)^{-1}$\\
$n_0+5$           & \\
\hline
$n_0+6$           & $\epsilon_n>C(\ln_{(K+1)}n)^{-1}$\\
$n_0+7$           & \\
$n_0+8$           & \\
\hline
$n_0+9$           & $\epsilon_n\leq(\ln_{(K+1)}n)^{-1}$\\
$n_0+10$          & \\
$n_0+11$          & \\
\hline
\ldots            & \ldots\\
       \hline
        \end{tabular}
    \end{center}
\end{table}
Based on this table one can see that, as $m$ tends to infinity, there exists an increasing to infinity subsequence $N_{m_1}$, $N_{m_2}$,\ldots, resulting in the series of sums $\sum_{i=n_{m_1-1}+j_{m_1}}^{n_{m_1}}a_{i}$, $\sum_{i=n_{m_2-1}+j_{m_2}}^{n_{m_2}}a_{i}$,\ldots with increasing to infinity number of terms.

Notice that in both of these examples the assumption $a_{n+1}\leq a_n$ is supported. This is seen from the presentation for $a_n$ given by \eqref{15}.

\smallskip
Hence, following cases (a) and (b), without loss of generality it can be assumed that for the original series $
\sum_{n=1}^{\infty}a_n
$, there is no $K\geq1$ such that \eqref{13} is satisfied for all large $n$.

\smallskip

Thus, following the assumption that \eqref{13} is not satisfied, our study reduces to the following two cases:
\begin{enumerate}
\item [(i)] there exists $K\geq1$ and $n_0$ such that for all $n\geq n_0$
\[
\sqrt[-n]{a_n}\leq1+\frac{1}{n}+\frac{1}{n}\sum_{i=1}^{K-1}\frac{1}{\prod_{k=1}^{i}\ln_{(k)}n}+\frac{c^*}{n\prod_{k=1}^{K}\ln_{(k)}n},
\]
where $c^*\leq1$;

\item [(ii)] for large $n$ and $K$
\[
\begin{aligned}
\sqrt[-n]{a_n}=1+\frac{1}{n}&+\frac{1}{n}\sum_{i=1}^{K}\frac{1}{\prod_{k=1}^{i}\ln_{(k)}n},
\end{aligned}
\]
where $n$ and $K$ are given such that $\ln_{(K)}n$ is large.
\end{enumerate}

In case (i), if $c^*<1$, then the series diverges due to representation \eqref{15}. If $c^*=1$, then the inequality in (i) is rewritten
\[
\sqrt[-n]{a_n}\leq1+\frac{1}{n}+\frac{1}{n}\sum_{i=1}^{K}\frac{1}{\prod_{k=1}^{i}\ln_{(k)}n},
\]
and following the derivation similar to that given in Appendix A, we obtain the inequality
\[
a_{n}\geq C\frac{1}{n\prod_{k=1}^{K-1}\ln_{(k)}n}, \quad K\geq1,
\]
where $C>0$ is some constant. Hence \eqref{4} diverges.
\smallskip

In case (ii), we are to consider the sequence
\[
\begin{aligned}
\sqrt[-n]{a_n(K)}=1+\frac{1}{n}&+\frac{1}{n}\sum_{i=1}^{K}\frac{1}{\prod_{k=1}^{i}\ln_{(k)}n},
\end{aligned}
\]
assuming that both $K$ and $n$ increase to infinity, where $n$ and $K$ are chosen such that $\ln_{(K)}n$ is large. Let $n_0(K)$ be such a number that $\ln_{(K)}n_0(K)>0$. Then
\[
\sum_{n=n_0(K)}^{\infty}a_{n}(K)\geq C\sum_{n=n_0(K)}^{\infty}\frac{1}{n\prod_{k=1}^{K-1}\ln_{(k)}n}=\infty,
\]
and for any increasing sequence $K_1<K_2<\ldots$ we have
\[
\lim_{i\to\infty}\sum_{n=n_0(K_i)}^{\infty}a_{n}(K_i)\geq C\lim_{i\to\infty}\sum_{n=n_0(K_i)}^{\infty}\frac{1}{n\prod_{k=1}^{K_i-1}\ln_{(k)}n}=\infty,
\]
where $C$ is an absolute constant for all $i$. Thus if a series is convergent, then it must be presented by \eqref{13} with some $c>1$ and integer $K\geq1$ for almost all $n$.
\end{proof}

\begin{rem}\label{r1} For the `if' condition of Theorem \ref{t1}, the root function $\sqrt[-n]{a_n}$ on the left-hand side of \eqref{13} can be replaced with
the fraction $a_n/a_{n+1}$.
Indeed, assume that for all $n\in\mathscr{N}$ we have the inequality
\[
\frac{a_{n}}{a_{n+1}}\geq1+\frac{1}{n}+\frac{1}{n}\sum_{k=1}^{K-1}\frac{1}{\prod_{j=1}^{k}\ln_{(j)}n}+\frac{c}{n\prod_{k=1}^{K}\ln_{(k)}n},
\]
where $c>1$, $K\geq1$. Then, presentation \eqref{2} implies that $n_i+j_{i+1}\leq n\leq n_{i+1}$, $i\geq0$, $n_0=0$. For the boundary elements in the sums of $I_1$, we have the similar inequality
\[
\frac{a_{n_m}}{a_{n_m+j_{m+1}}}\geq1+\frac{1}{n_m}+\frac{1}{n_m}\sum_{k=1}^{K-1}\frac{1}{\prod_{j=1}^{k}\ln_{(j)}n_m}+\frac{c}{n_m\prod_{k=1}^{K}\ln_{(k)}n_m},
\]
which is true due to the convention $a_{n+1}\leq a_n$ for all $n\geq1$. Then, renumbering the terms in $I_1$ we arrive at the required result.

However for the `only if' condition, $\sqrt[-n]{a_n}$ cannot be replaced with $a_n/a_{n+1}$. It is seen from the following simple example. Suppose that for series \eqref{4} we have $a_{2k-1}=a_{2k}$, $k=1,2,\ldots$, and for all large $k$
\[
\frac{a_{2k}}{a_{2k+1}}\geq1+\frac{1}{2k}+\frac{1}{2k}\sum_{i=1}^{K-1}\frac{1}{\prod_{j=1}^{i}\ln_{(j)}(2k)}+\frac{c}{n\prod_{j=1}^{K}\ln_{(j)}(2k)}, \quad K\geq1, \quad c>1.
\]
Then, despite $\lim_{n\to\infty}N(n)/n=1/2$, the series is convergent.
\end{rem}

\section{Application}\label{S3}

Theorem \ref{t1} can be used to improve the conditions of recurrence and transience for Markov processes and, in particular, birth-and-death processes.

Consider a birth-and-death process, in which the birth and death parameters $\lambda_n$ and $\mu_n$ all are in $(0, \infty)$. It is known \cite[page 370]{K} that a birth-and-death process is transient if and only if
\[
\sum_{n=1}^{\infty}\prod_{k=1}^{n}\frac{\mu_k}{\lambda_k}<\infty.
\]
So, Theorem \ref{t1} can be immediately applied, if the required conditions are satisfied. Then for the `if' condition, it is convenient to use the fraction $a_n/a_{n+1}$ on the left-hand side of \eqref{13} rather than $\sqrt[-n]{a_n}$, since $a_n/a_{n+1}$ in this case reduces to $\lambda_{n+1}/\mu_{n+1}$ (see Remark \ref{r1}).

Note also that the condition $\prod_{k=1}^{n}({\mu_k}/{\lambda_k})<rn^{-\alpha}$ can be replaced by the following simpler conditions: (i) $\mu_n/\lambda_n$ converges to $1$ as $n\to\infty$, and (ii) there exist $\alpha>0$ and $n_0$ such that for all $n>n_0$ we have $\ln({\mu_n}/{\lambda_n})<-\alpha\ln n/{n}$.

\appendix
\section{Derivation of \eqref{15}}

From \eqref{16} we have
\begin{equation}\label{A.1}
\ln a_n=-n\ln\left(1+\frac{1}{n}+\frac{1}{n}\sum_{i=1}^{K-1}\frac{1}{\prod_{k=1}^{i}\ln_{(k)}n}+\frac{s_n}{n\prod_{k=1}^{K}\ln_{(k)}n}\right), \quad K\geq1.
\end{equation}
From Taylor's expansion of the logarithm on the right-hand side of \eqref{A.1} we obtain
\[
\ln a_n=-1-\sum_{i=1}^{K-1}\frac{1}{\prod_{k=1}^{i}\ln_{(k)}n}-\frac{s_n}{\prod_{k=1}^{K}\ln_{(k)}n}+O\left(\frac{1}{n}\right), \quad K\geq1.
\]
Hence,
\[
a_n=\begin{cases}\mathrm{e}^{-1+O(1/n)}\frac{1}{\left(n\prod_{k=1}^{K-2}\ln_{(k)}n\right)\ln_{(K-1)}^{s_n}n}, &K\geq2,\\
\mathrm{e}^{-1+O(1/n)}\frac{1}{n^{s_n}} &K=1.
\end{cases}
\]
The result follows.

\section*{Acknowledgement} The author thanks the reviewer for careful reading the paper and the comprehensive report.

\end{document}